\theoremstyle{plain}
\newtheorem{thm}{Theorem}
\newtheorem{prop}{Proposition}
\newtheorem{lem}{Lemma}
\theoremstyle{definition}
\newtheorem{prob}{Problem}
\newtheorem{ack}{Acknowledgements}
\def\cprime{$'$}
\newcommand{\nicefrac}[2]{\leavevmode\kern.1em
\raise.5ex\hbox{\the\scriptfont0 #1}\kern-.1em
/\kern-.15em\lower.25ex\hbox{\the\scriptfont0 #2}}
\def\halv{\mathchoice{{ \frac 1 2}}{1/2}{1/2}{1/2}}
\newcommand{\C}{{\mathbb C}} 
\newcommand{\R}{{\mathbb R}} 
\newcommand{\abs}[1]{{\left| {#1} \right|}} \newcommand{\p}[1]{{\left(
      {#1} \right)}}
\renewcommand{\Re}{\operatorname{Re}}
\renewcommand{\Im}{\operatorname{Im}}
 \newcommand{\Oh}[1]{{O \p{#1}}}
\begin{document}

\date{}

\author{Johan Andersson\thanks{Email:johan.andersson@his.se \, Address:School of Engineering science, University of Sk\"ovde, Box 408, 541 28 Sk\"ovde, SWEDEN}}

\title{Non universality on the critical line}
\maketitle

\begin{abstract}
  We prove that the Riemann zeta-function is not universal on the critical line  by using the fact that the Hardy $Z$-function is real, and some elementary considerations. This is a related to a recent result of Garunk{\v{s}}tis and Steuding. We also prove conditional and partial results for non universality on the lines $\Re(s)=\sigma$ for $0<\sigma<\halv$ and together with our recent result for non universality on the line $\Re(s)=1$ it will mostly answer the question of on what lines the zeta-function is universal. 
\end{abstract}

\tableofcontents

\section{Introduction}

Voronin \cite{Voronin2,Voronin0} proved the following Theorem:
\begin{thm}  Let 
 $D=\{ z \in \C :|z-3/4| \leq r<1 \}$, and suppose that $f$ is any continuous function that is nonvanishing and analytic on  $D$. Then 
\begin{gather*} 
 \liminf_{T \to \infty} \max_{z  \in D} \abs{\zeta(z+iT)-f(z)}=0.
\end{gather*}
\end{thm}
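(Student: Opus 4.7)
The plan is to follow Voronin's original strategy, which combines the Euler product of $\zeta$ with an approximation theorem in a Hardy/Bergman-type function space and equidistribution on the infinite-dimensional torus.

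First I would pass to logarithms. Since $f$ is continuous on $D$, analytic on the interior and nowhere zero, there is a continuous branch $g=\log f$ on $D$, and approximating $\zeta(z+iT)$ by $f(z)$ uniformly on $D$ is essentially equivalent to approximating $\log\zeta(z+iT)$ by $g(z)$ on a slightly smaller disk. The Euler product gives
\begin{equation*}
  \log\zeta(s)=\sum_{p}\sum_{k\ge 1}\frac{1}{k\,p^{ks}},
\end{equation*}
which, for $\re(s)>\halv$ and $s$ away from the zeros of $\zeta$, can be approximated in mean square by the truncated sum $L_N(s)=\sum_{p\le N} p^{-s}$; this is the standard mean-value ingredient and handles the analytic side.

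Next I would prove the key density lemma: the set of Dirichlet polynomials of the form
\begin{equation*}
  \sum_{p\le N}\log\p{1-\alpha_p p^{-z}}^{-1},\qquad \alpha_p\in\C,\;|\alpha_p|=1,
\end{equation*}
is dense, as $N\to\infty$, in the space of functions analytic on the interior of $D$ and continuous on $D$, in the topology of uniform convergence on $D$. This reduces, via a Hahn--Banach / Riesz-representation argument on a suitable Bergman space, to showing that the sequence of functions $\{p^{-z}\}_p$ is "conditionally dense" in the sense of a Pechersky--type rearrangement theorem; here one exploits that $\sum 1/p$ diverges but $\sum 1/p^{2\sigma}$ converges for $\sigma>\halv$, which is exactly why the center $3/4$ lies in the right half-strip. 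I expect this rearrangement step to be the main obstacle: it requires choosing the phases $\alpha_p$ carefully so that the partial sums approximate the prescribed target function, and it is where the nonvanishing hypothesis on $f$ enters decisively.

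Finally I would use Kronecker's theorem (or Weyl equidistribution) on the infinite torus $\prod_p S^1$. Since $\{\log p\}_p$ is linearly independent over $\Q$, the orbit $T\mapsto(p^{-iT})_p$ is equidistributed with respect to the Haar measure. Consequently, once the phases $\alpha_p$ yielding an approximation of $g$ have been fixed, there is a set of $T$ of positive lower density for which $p^{-iT}$ is simultaneously close to $\alpha_p$ for all $p\le N$, making $L_N(z+iT)$ close to the desired target on $D$. Combining this with the mean-square estimate for $\log\zeta(s)-L_N(s)$ (to control the tail of the Euler product), one obtains the required approximation, and in particular $\liminf_{T\to\infty}\max_{z\in D}\abs{\zeta(z+iT)-f(z)}=0$.
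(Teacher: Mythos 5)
This statement is Voronin's universality theorem, which the paper does not prove: it is quoted from Voronin's 1975 papers (and its extension from Bagchi), so there is no in-paper argument to compare against. Your sketch is precisely the classical Voronin--Bagchi strategy --- pass to $\log f$ using the nonvanishing hypothesis, approximate $\log\zeta$ in mean square by a truncated Euler product, prove a density/rearrangement lemma for the finite products $\prod_{p\le N}(1-\alpha_p p^{-z})^{-1}$ with unimodular phases in a Bergman-type space via Hahn--Banach and a Pechersky-type theorem, and then realize the chosen phases by Kronecker--Weyl equidistribution of $(p^{-iT})_p$ --- and that architecture is correct; it is essentially the proof in the cited sources and in Steuding's monograph. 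Two caveats on where the sketch conceals the real work. First, the rearrangement step, which you rightly identify as the main obstacle, is a genuine theorem (conditional convergence of $\sum \pm p^{-z}$ in the Hilbert-space sense, resting on $\sum p^{-1}=\infty$ versus $\sum p^{-2\sigma}<\infty$ for $\sigma>1/2$) and must be proved, not just invoked. Second, your phrase ``for $s$ away from the zeros of $\zeta$'' hides the other hard point: the translated disk $D+iT$ may contain zeros of $\zeta$, where $\log\zeta$ is not even defined, so one must show that for a positive-proportion set of $T$ the disk is zero-free (this requires a zero-density estimate such as $N(\sigma,T)=o(T)$ for $\sigma>1/2$, combined with a Borel--Carath\'eodory or three-circles argument to upgrade the mean-square closeness to uniform closeness on a smaller disk), or else follow Bagchi's probabilistic route through limit theorems for random Dirichlet series. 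With those two ingredients supplied, the proposal is a correct account of the known proof.
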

Bagchi \cite{Bagchi}  generalized this result to any compact set $D$ with connected complement lying entirely within $\halv<\Re(s)<1$. These results have resulted in massive international interest and an entire sub-field of analytic number theory concerned with``universality of $L$-functions''. For good introductions to this field, see the monographs  of Laurin{\v{c}}ikas \cite{Laurincikas}, or Steuding \cite{Steuding}.

 By choosing the special case $D=[\sigma,\sigma+iH]$ for $\halv<\sigma<1$ in Theorem 1 we obtain:
 \begin{thm}  Suppose $\halv<\sigma<1$ and that $f$ is any continuous function on the interval $[0,H]$.  Then 
\begin{gather*}\liminf_{T \to \infty} \max_{t \in [0,H]} \abs{\zeta(\sigma+it+iT)-f(t)}=0.\end{gather*} \end{thm}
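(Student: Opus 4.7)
The plan is to deduce Theorem 2 from the generalization of Theorem 1 due to Bagchi (cited in the excerpt), by replacing the arbitrary continuous function $f$ on the segment with an entire function that is nonvanishing on it.

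Fix $\delta>0$. By the Weierstrass polynomial-approximation theorem applied separately to $\Re f$ and $\Im f$, there is a polynomial $p\in\C[t]$ with
\begin{gather*}
\max_{t\in[0,H]} \abs{f(t)-p(t)} < \delta/3.
\end{gather*}
Setting $P(z):=p(-i(z-\sigma))$ gives an entire function of $z$ with $P(\sigma+it)=p(t)$, so the approximation of $f$ extends analytically off the segment.

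Next, $p([0,H])$ is the image of an interval under a complex polynomial, hence a rectifiable curve in $\C$ of plane Lebesgue measure zero. I may therefore choose $c\in\C$ with $\abs{c}<\delta/3$ and $-c\notin p([0,H])$. The entire function $Q(z):=P(z)+c$ is then nonvanishing on the compact set
\begin{gather*}
K:=\{\sigma+it:t\in[0,H]\}\subset\{s\in\C:\halv<\Re(s)<1\},
\end{gather*}
and $K$ has connected complement in $\C$. Bagchi's theorem therefore applies to $Q$ on $K$ and yields $\tau$ arbitrarily large with $\max_{z\in K}\abs{\zeta(z+i\tau)-Q(z)}<\delta/3$. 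Combining by the triangle inequality,
\begin{gather*}
\max_{t\in[0,H]} \abs{\zeta(\sigma+it+i\tau)-f(t)} \leq \abs{c} + \delta/3 + \delta/3 < \delta,
\end{gather*}
and letting $\delta\to 0$ gives the claim.

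The main obstacle to a direct invocation of Voronin's or Bagchi's theorem is the \emph{nonvanishing} hypothesis on the target function: an arbitrary continuous $f$ on $[0,H]$ could have image filling a region in $\C$ (e.g.\ a space-filling curve), in which case no small perturbation makes it nonvanishing. Interposing a polynomial resolves this, because a polynomial image of an interval is one-dimensional and can therefore be pushed off $0$ by an arbitrarily small complex translation.
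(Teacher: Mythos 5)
Your proof is correct and follows essentially the same route as the paper: there Theorem 2 is obtained by specializing Bagchi's extension of Theorem 1 to the segment $D=[\sigma,\sigma+iH]$ and citing \cite{Andersson2} for the removal of the nonvanishing hypothesis on compact sets without interior points. Your Weierstrass-approximation-plus-small-translation argument (using that a polynomial image of an interval has planar measure zero) is a self-contained proof of exactly that cited step for the segment, so nothing is missing.
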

The fact that we do not need to assume that $f$ is nonvanishing as in Theorem 1, is proved in \cite{Andersson2} whenever  $D$ is without interior points\footnote{For the case with interior points, partial results have been obtained in this direction. For some sets $D$ we can remove the criterion that the function is nonvanishing on the {\em boundary} of $D$. See  Andersson \cite{Andersson5} and Gauthier \cite{Gauthier}.}.
A related result of Voronin \cite{Voronin1} that predates his universality result is the following:
\begin{thm}
 Suppose  $\halv<\sigma \leq 1.$ Then the set of $n$-tuples 
 \begin{gather*}
   \{ (\zeta(\sigma+it),\zeta'(\sigma+it),\ldots,\zeta^{(n-1)}(\sigma+it)) : t \in \R \} 
\end{gather*}
 is dense in $\C^n$.
\end{thm}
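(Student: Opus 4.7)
The plan is to reduce Theorem 3 for $1/2 < \sigma < 1$ to Voronin's universality theorem (Theorem 1) via the Cauchy integral formula, and to handle the endpoint $\sigma = 1$ by a separate Euler-product argument that predates universality.

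Fix $\sigma \in (1/2, 1)$ and a target tuple $(a_0, \ldots, a_{n-1}) \in \C^n$ with $a_0 \neq 0$. I would first construct an entire, nonvanishing function
\[
f(z) = a_0 \exp\!\left( \sum_{k=1}^{n-1} c_k (z-\sigma)^k \right),
\]
where the coefficients $c_k$ are chosen recursively so that $f^{(k)}(\sigma) = a_k$ for $0 \leq k \leq n-1$; this system is triangular and solvable precisely because $a_0 \neq 0$. On any closed disk $D = \{|z-\sigma| \leq r\}$ with $r < \min(\sigma - 1/2,\, 1 - \sigma)$, the function $f$ is analytic and nonvanishing, so Theorem 1 (in the translated form guaranteed by Bagchi's generalization) produces, for any $\varepsilon > 0$, a real number $T$ with $\max_{z \in D}|\zeta(z+iT) - f(z)| < \varepsilon$.

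Next I would apply Cauchy's estimate to $h(z) = \zeta(z+iT) - f(z)$ on the circle $\partial D$, yielding $|h^{(k)}(\sigma)| \leq k!\,\varepsilon / r^k$ for $0 \leq k \leq n-1$. Since $h^{(k)}(\sigma) = \zeta^{(k)}(\sigma + iT) - a_k$, letting $\varepsilon \to 0$ shows that every tuple with $a_0 \neq 0$ lies in the closure of $\{(\zeta(\sigma+it), \ldots, \zeta^{(n-1)}(\sigma+it)) : t \in \R\}$. As $\{a_0 \neq 0\}$ is dense in $\C^n$, one further passage to the limit gives density on all of $\C^n$.

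The main obstacle is the case $\sigma = 1$, because Theorem 1 is not available on the abscissa $\Re s = 1$. Here I would fall back on Voronin's original method: approximate $\zeta(1+it)$ and its derivatives by truncated Euler products/Dirichlet polynomials over primes $p \leq N$, control the tail via a mean-square estimate on the line $\Re s = 1$, invoke Kronecker's theorem to show that $\{((t \log p)/(2\pi))_{p \leq N} : t \in \R\}$ is equidistributed on the torus $\mathbb{T}^{\pi(N)}$, and finally verify that the image of this torus under the truncated product and its first $n-1$ formal derivatives is dense in $\C^n$. Combining these three ingredients transfers density to $\zeta$ itself on $\Re s = 1$. The genuinely delicate step, and the one that cannot be finessed via universality, is the mean-square control of the tail on the line $\sigma = 1$; everything else is a finite-dimensional equidistribution argument.
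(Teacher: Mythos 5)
The paper does not actually prove Theorem~3: it is quoted as a result of Voronin \cite{Voronin1}, with only the remark that for $\halv<\sigma<1$ it ``can also be proved quite easily from Theorem~1'' via power-series considerations. Your treatment of the range $\halv<\sigma<1$ is a correct and complete working-out of exactly that remark: the triangular solvability of $f^{(k)}(\sigma)=a_k$ for the exponential ansatz (possible precisely when $a_0\neq 0$), the application of universality on a closed disk strictly inside the strip, the Cauchy estimates $|h^{(k)}(\sigma)|\le k!\,\varepsilon/r^k$, and the final density argument over $\{a_0\neq 0\}$ are all sound. This is the same route the paper has in mind, just carried out in detail.

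The endpoint $\sigma=1$ is where your proposal stops being a proof. You correctly observe that universality is unavailable there and that one must revert to Voronin's original method, but the two steps you defer are precisely the substantive ones: (i) the mean-square approximation of $\zeta(1+it)$ \emph{and its first $n-1$ derivatives} by truncated Euler products on the line $\Re(s)=1$, and (ii) the verification that the image of the torus $\mathbb{T}^{\pi(N)}$ under the truncated product and its formal derivatives becomes dense in $\C^n$ as $N\to\infty$. Step (ii) in particular is not a routine ``finite-dimensional equidistribution argument''; it is the heart of Voronin's 1972 paper (one must show that the values of $(\log P_N,(\log P_N)',\dots)$ over the torus fill out an arbitrarily large region, which requires a separate argument about sums of the vectors $(\log p)^j p^{-1-i\theta_p}$). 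Since the paper itself simply cites Voronin for the full statement, your proposal is on the right track, but as written the case $\sigma=1$ is an outline of someone else's proof rather than a proof, and you should either carry out those two steps or cite \cite{Voronin1} for them as the paper does.
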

For $1/2<\sigma<1$ this can also be proved quite easily from Theorem 1, which by the power series expansion of an analytic function can be viewed to be an infinite-dimensional version of Theorem 3. Theorem 3 for  $\sigma=\halv$ was recently proved to be false by Garunk{\v{s}}tis and Steuding \cite[Theorem 1 $(ii)$.]{GarSteu}:
\begin{thm}
     The set of  pairs  \begin{gather*} \left \{\p{\zeta\p{\frac 1 2+it},\zeta'\p{\frac 1 2+it}} : t \in \R \right \} \end{gather*}
 is not dense in $\C^2$ 
\end{thm}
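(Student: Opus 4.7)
My plan is to exploit the Hardy $Z$-function
\[
Z(t):=e^{i\theta(t)}\zeta(\halv+it),
\]
where $\theta$ denotes the Riemann--Siegel theta function. The essential fact, equivalent to the functional equation for $\zeta$, is that $Z(t)\in\R$ for every $t\in\R$. The idea is that reality of both $Z(t)$ and $Z'(t)$ enforces a pointwise algebraic constraint linking $\zeta(\halv+it)$ and $\zeta'(\halv+it)$, which the growth of $\theta'(t)$ then amplifies into non-density.

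Differentiating the definition of $Z$ gives
\[
Z'(t)=ie^{i\theta(t)}\bigl(\theta'(t)\,\zeta(\halv+it)+\zeta'(\halv+it)\bigr),
\]
and extracting the real part of this relation (using that the left side is real) yields
\[
\Re\bigl(e^{i\theta(t)}\zeta'(\halv+it)\bigr)=-\theta'(t)\,Z(t).
\]
Since $|Z(t)|=\abs{\zeta(\halv+it)}$, passing to absolute values produces the key inequality
\[
|\theta'(t)|\,\abs{\zeta(\halv+it)}\le\abs{\zeta'(\halv+it)}
\qquad(t\in\R).
\]

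To conclude, I would use that $\theta'(t)\sim\halv\log(t/2\pi)$, so in particular $|\theta'(t)|\to\infty$ as $|t|\to\infty$. The set $T_0:=\{t\in\R:|\theta'(t)|\le 1\}$ is therefore bounded, and $K:=\{(\zeta(\halv+it),\zeta'(\halv+it)):t\in T_0\}$ is a compact subset of $\C^2$. Choosing $a\in\C$ with $|a|$ strictly larger than $\sup_{t\in T_0}\abs{\zeta(\halv+it)}$, a sufficiently small ball around $(a,0)$ is disjoint both from $K$ (by the choice of $a$) and from the image of $\R\setminus T_0$ (on which the inequality forces $\abs{\zeta}\le\abs{\zeta'}$, incompatible with being near $(a,0)$). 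The image therefore misses an open set of $\C^2$, so it is not dense. The only nontrivial step is noticing that the Hardy $Z$-function is the correct object to differentiate; once this is done the identity holds for every real $t$ (the only pole of $\zeta$ is at $s=1$, and $\theta$ is real-analytic on $\R$), so no analytic obstacle remains.
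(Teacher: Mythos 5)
Your argument is correct. One thing to be aware of: the paper does not actually prove this statement itself --- it is quoted as Theorem 1~$(ii)$ of Garunk{\v{s}}tis and Steuding, whose proof works with the logarithmic derivative $\zeta'/\zeta$ and the functional equation. Your derivation is the same mechanism in different clothing: differentiating $Z(t)=e^{i\theta(t)}\zeta(\halv+it)$ and using that $Z$ and $Z'$ are real is equivalent to the relation $\Re\bigl(\zeta'/\zeta\bigr)(\halv+it)=-\theta'(t)$ that one gets from the functional equation, and your key inequality
\begin{gather*}
\abs{\theta'(t)}\,\abs{\zeta\p{\halv+it}}\le\abs{\zeta'\p{\halv+it}}
\end{gather*}
is just that identity multiplied through by $\abs{\zeta}$. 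This packaging has a small technical advantage over the logarithmic-derivative form, namely that it holds trivially at the zeros of $\zeta$ on the critical line, so no care about division by zero is needed. The concluding topological step is sound: $\theta'$ is continuous and $\theta'(t)\to+\infty$ as $\abs{t}\to\infty$ (by evenness of $\theta'$), so $T_0=\{t:\abs{\theta'(t)}\le 1\}$ is compact, $\sup_{t\in T_0}\abs{\zeta(\halv+it)}$ is finite, and the ball of radius $\min\bigl(\abs{a}/2,\ \abs{a}-\sup_{T_0}\abs{\zeta}\bigr)$ around $(a,0)$ misses the whole curve. It is also worth noting that your inequality is exactly the quantitative input behind the paper's own Theorems 5 and 6, which use the reality of the Hardy $Z$-function in the same way (there via $\abs{f-\zeta}\ge\abs{f}\,\abs{\sin(\theta(t+T)-\eta(t))}$); so your proof is very much in the spirit of the paper even though the paper itself only cites the result.
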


They remarked that this implies that the Voronin universality theorem cannot be  extended to any region that covers the line $\Re(s)=1/2$.
 While it is true that it means that Theorem 3 does not extend to the line $\Re(s)=\halv$ it does not follow that Theorem 2 does not extend to the line $\Re(s)=\halv$. 
   Since an interval in the complex plane has an empty interior it does not contain any neighborhood of any point on the critical line.
 In fact it is quite easy to prove that any complex valued function $f(t)$ on an interval can be estimated up to $\epsilon$ in sup-norm by a complex valued polynomial $P(t)$ such that $|P'(t)| \geq  1$.
 By the mean-value theorem this is not true for a real-valued function. 

It is also clear that Theorem 3 on a line does not imply Theorem 2 on a line, an example is the Riemann zeta-function, where  we proved that Theorem 2 is false on $\Re(s)=1$  \cite[Theorem 2]{Andersson4}\footnote{This also follows from \cite[Theorem 7]{Andersson5} and \cite[Remark 4]{Andersson6}.} or, while Voronin proved that Theorem 3 is true.

A natural question to ask is the following: On what lines is the Riemann zeta-function universal? This is  answered in \cite{Andersson4} assuming the Riemann hypothesis. However the the main line considered in that paper is its abscissa of convergence $\Re(s)=1$. In this paper we prove the required result for the critical line $\Re(s)=\halv$. Also we manage to relax the condition of the Riemann hypothesis to that of the Lindel\"of hypothesis.

We prove that Theorem 2 does not extend to the line $\Re(s)=\halv$, by a similar argument as in Garunk{\v{s}}tis-Steuding's paper, by using  the functional equation of the Riemann zeta-function. However, instead of working with the logarithmic derivative we will work with the logarithm of the zeta-function directly. It should be remarked that this proof method of Garunk{\v{s}}tis-Steuding has found other applications in this theory. For example, Kalpokas-Steuding \cite{KalpokasSteuding} proved that the zeta-function takes arbitrarily large real values on the critical line. For these type of results, see also Christ-Kalpokas-Steuding \cite{ChristKalpokasSteuding}.

\section{Non universality on the critical line - first argument}
It has long been known\footnote{I learned of this from Garunk{\v{s}}tis, and according to Steuding it has long been known,  mathematical folklore?} conditionally that Theorem 2 does not extend  to the critical line since the Riemann hypothesis implies that the zeroes will lie to densely. Littlewood \cite{Littlewood} (see \cite[Theorem 14.13]{Titchmarsh}) proved that the Riemann hypothesis implies the estimate
\begin{gather} \label{ui}
    N(T+\delta)-N(T)=\frac{\delta \log  T}{2 \pi} +\Oh{\frac {\log T} {\log \log T} }, \qquad 0<\delta \leq 1.
\end{gather}
Hence there will be a zero in every interval of fixed length for $\zeta(\halv+iT+it)$ provided $T$ is large enough. Thus, these translates of the zeta-function can not 
approximate for example the function $f(t)=1$ on the interval in sup-norm.

There are some problems with this argument:

Firstly, of course we would like to have an unconditional result. In order for the estimate \eqref{ui} to be known to be true we need the Riemann hypothesis. Also we need to prove that some good proportion of these zeros lies on the critical line. This would of course also follow from the Riemann hypothesis, however it is not known unconditionally.

Secondly: While sup-norm is the traditional norm used in universality, this follows from the fact that universality in $L^2$ sense in some neighborhood of $D$ implies universality in sup-norm on $D$. This is a simple consequence of the classical theory of complex functions, for example  Cauchy's theorem.
In fact  in the proofs of universality, Hilbert space arguments are used and the natural norm is the $L^2$-norm. When considering universality on lines and curves, we do not know anything about universality properties in some neighborhood of these lines and curves, and the arguments that shows that $L^2$-norm universality implies sup-norm universality do not apply. Therefore we believe that the natural norm in these cases should be $L^2$-norm.

\section{Main Theorems}

\subsection{Non universality on the critical line}
We will state our two theorems that proves that the Riemann zeta-function is not universal in a strong sense on the critical line:

\begin{thm}
 Suppose $G(t)$ is any real valued function that is locally $L^1$, and that $f \in C(0,H)$. Then 
 \begin{gather*}\liminf_{T \to \infty} \int_0^H \abs{f(t)-G(t+T) \zeta\p{\halv+iT+it}} dt\geq \frac {2} {\pi} \int_0^H |f(t)|dt.\end{gather*} 
\end{thm}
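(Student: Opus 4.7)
The plan is to exploit the Hardy $Z$-function representation
\[
\zeta\p{\halv+iu} = Z(u)\,e^{-i\theta(u)}, \qquad Z(u)\in\R,
\]
where $\theta$ is the Riemann--Siegel theta function, which satisfies $\theta'(u) = \halv \log(u/(2\pi)) + \Oh{1/u} \to \infty$. Since the hypothesis forces $G$ to be real-valued, the approximating function factors as
\[
G(T+t)\,\zeta\p{\halv + iT + it} = G(T+t)\,Z(T+t)\,e^{-i\theta(T+t)},
\]
which at each fixed $(T,t)$ is a complex number confined to the real line through the origin rotated by angle $-\theta(T+t)$.

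The first main step is an elementary geometric inequality: for $w\in\C$ and $r,\phi\in\R$,
\[
|w - r e^{i\phi}| \geq \abs{\Im\p{w\,e^{-i\phi}}},
\]
because the right-hand side is the perpendicular distance from $w$ to the line $\R \cdot e^{i\phi}$. Applying this with $\phi = -\theta(T+t)$ and writing $f(t) = |f(t)|e^{i\arg f(t)}$, we obtain pointwise
\[
\abs{f(t) - G(T+t)\,\zeta\p{\halv + iT + it}} \geq |f(t)| \cdot \abs{\sin\p{\arg f(t) + \theta(T+t)}},
\]
with the convention that the right-hand side is zero on the zero set of $f$.

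Integrating and taking $\liminf$, the theorem reduces to
\[
\lim_{T \to \infty} \int_0^H |f(t)| \cdot \abs{\sin\p{\arg f(t) + \theta(T+t)}}\,dt = \frac{2}{\pi}\int_0^H |f(t)|\,dt,
\]
which is just the statement that $|\sin|$ has period mean $2/\pi$. Since $\theta'(T+t) \to \infty$ uniformly for $t\in[0,H]$, on this short interval the phase $\theta(T+t)$ winds many times at essentially constant rate as $T$ grows, while $|f|$ and $\arg f$ vary slowly. A change of variables $u = \theta(T+t)$, or equivalently a subdivision of $[0,H]$ into short pieces on which $|f|$ and $\arg f$ are nearly constant, reduces the computation to the classical mean value
\[
\lim_{M\to\infty} \frac{1}{M}\int_0^M \abs{\sin(\alpha + u)}\,du = \frac{2}{\pi}.
\]

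The main obstacle is this last oscillatory-integral step, where care is needed because $\arg f$ may be discontinuous or undefined at zeros of $f$. This is handled by splitting $[0,H]$ into a small neighborhood of $\{f=0\}$, on which $|f|$ is small and the contribution to both sides is negligible, and its complement, on which $f$ is continuous and bounded away from zero; subdividing the latter into short pieces on which both $|f|$ and $\arg f$ oscillate by at most $\epsilon$, the mean-value estimate on each piece combines to give the desired limit, and together with the geometric bound this yields the theorem.
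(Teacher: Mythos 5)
Your proposal is correct and takes essentially the same route as the paper: the perpendicular-distance inequality coming from the fact that $G(T+t)\zeta\p{\halv+iT+it}$ lies on the real line rotated by $-\theta(T+t)$ reduces the claim to showing that $\int_0^H |f(t)|\,\abs{\sin(\arg f(t)+\theta(T+t))}\,dt$ tends to $\frac{2}{\pi}\int_0^H|f(t)|\,dt$, which the paper likewise obtains from Stirling's asymptotics for $\theta$ and a subdivision of $[0,H]$ into pieces on which $f$ is nearly constant. Your extra care around the zero set of $f$ is a welcome refinement of the same argument rather than a different approach.
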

This proves that either $f(t)=\zeta(\halv+iT+it)$ for some $t$, or $f(t)$ can not be approximated by translates of the Riemann zeta-function. In the case when $f(t)=\zeta(\halv+iT+it)$ it can furthermore not be approximated by any other $T' \neq T$. Now it is easy to show that the Riemann zeta-function can not be real valued in any interval on the critical line (we prove this rigorously later in this paper) and by this we immediately obtain:
\begin{thm}
 Let $G$ be any real valued function that is locally $L^1$. If $f \in C(0,H)$  is a real valued function, then
\begin{gather*}
    \inf_T \int_0^H \abs{f(t)-G(T+t) \zeta\p{\halv+it+iT}}dt >0, 
 \end{gather*}
 unless $f$ is identically zero.  
\end{thm}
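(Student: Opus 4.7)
The plan is to prove $\inf_T I(T)>0$, where $I(T) := \int_0^H \abs{f(t)-G(T+t)\zeta(\halv+it+iT)}\,dt$, by splitting into two regimes: for $|T|$ large I invoke Theorem 5 (and a reflected version of it), and for $T$ in a compact interval I show both that $I(T)>0$ for every single $T$ and that $T \mapsto I(T)$ is continuous, so that the infimum on any bounded interval is attained and strictly positive.

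For the pointwise step, suppose $I(T_0)=0$ for some $T_0 \in \R$. Then $f(t) = G(T_0+t)\zeta(\halv+it+iT_0)$ for almost every $t\in(0,H)$. Since $f$ and $G$ are real, the right-hand side must be real almost everywhere, so wherever $G(T_0+t)\neq 0$ the value $\zeta(\halv+it+iT_0)$ lies on the real axis. The key input, promised by the paper, is that $\zeta$ is not real-valued on any subinterval of the critical line; equivalently, the real-analytic function $s \mapsto \Im\zeta(\halv+is)$ does not vanish identically, so its zero set has Lebesgue measure zero. Consequently $G(T_0+t)=0$ for a.e. $t\in(0,H)$, so $f$ vanishes a.e. and then everywhere by continuity, contradicting $f\not\equiv 0$.

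To promote pointwise positivity to a uniform lower bound on a compact range of $T$, I would establish continuity of $I$. If $T_n \to T^*$, then translation continuity in $L^1_{\mathrm{loc}}$ gives $G(T_n+\cdot) \to G(T^*+\cdot)$ in $L^1(0,H)$, while $\zeta(\halv+i\cdot+iT_n) \to \zeta(\halv+i\cdot+iT^*)$ uniformly on $[0,H]$ with uniformly bounded modulus on the relevant compact set; the triangle inequality then yields $I(T_n)\to I(T^*)$, so on any compact interval $I$ attains its (positive) minimum. Outside a large compact interval, Theorem 5 gives $\liminf_{T\to +\infty} I(T) \geq (2/\pi)\int_0^H |f| > 0$, and the $T\to -\infty$ analogue follows by conjugating inside the absolute value, using $\overline{\zeta(\halv+is)}=\zeta(\halv-is)$ together with the reality of $f$ and $G$, and then making the change of variables $t \mapsto H-t$, $T \mapsto -T-H$; this recasts $I(T)$ as an integral of the same shape with reflected but still real data $\tilde f(u):=f(H-u)$, $\tilde G(\tau):=G(-\tau)$ at $T'=-T-H\to +\infty$, to which Theorem 5 applies directly.

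The main obstacle is the non-reality of $\zeta$ on intervals of the critical line, invoked in the pointwise step. This is morally easy but needs to be established rigorously; the cleanest route is via the Hardy $Z$-function representation $\zeta(\halv+it)=e^{-i\theta(t)}Z(t)$ with $Z$ real, giving $\Im\zeta(\halv+it)=-Z(t)\sin\theta(t)$, which vanishes only on the discrete union of the zeros of $Z$ and the Gram points. All remaining steps are standard combinations of Theorem 5, $L^1$ translation continuity, compactness, and the functional equation.
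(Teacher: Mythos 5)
Your proposal is correct and follows essentially the same route as the paper: Theorem 5 disposes of large $|T|$, while on a compact range of $T$ one combines pointwise positivity of the integral (reduced, as in the paper, to the fact that $\zeta$ is not real-valued on any subinterval of the critical line, via the Hardy $Z$-function and the non-constancy of $\theta$) with continuity and compactness to get a uniform lower bound. Your write-up is in fact more careful than the paper's at two points it glosses over, namely the case $T\to-\infty$ (Theorem 5 is stated only for $T\to+\infty$, and your conjugation-plus-reflection argument fills this in) and the role of the weight $G$, which the paper silently drops in its proof but which your measure-zero argument handles properly.
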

This shows that no real valued function can be approximated by the translates of the Riemann zeta-function on the critical line.

\subsection{On what lines is the zeta-function universal?}

In our paper \cite{Andersson4} we use this result and a lower bound for the Riemann zeta-function in short intervals to classify the lines where we have universality. Unfortunately we did not manage to prove this result unconditionally, but needed to use the Riemann hypothesis. We will here show how we can relax the condition to assume the Lindel\"of hypothesis instead. We have the following Lemma:
\begin{lem}
  Let $\epsilon>0$ be positive. Assuming the Lindel\"of hypothesis there exist a positive constant $C_\epsilon>0$ such that we have that
 \begin{gather*}
  \int_T^{T+\delta} \abs{\zeta(\sigma+it)} dt \geq (TC_\epsilon)^{-\frac \epsilon \delta}
  \end{gather*}
   for $T \geq 1$, $0<\delta \leq 1$ and $\halv \leq \sigma \leq 1$.
\end{lem}

It is clear that  the lower bound in Lemma 1 can not be improved to a positive constant, since by using Theorem 2 with $f(t)=0$ shows that
\begin{gather}
\liminf_{T \to \infty}\int_T^{T+\delta} \abs{\zeta(\sigma+it)}dt =0, \qquad \qquad \p{\halv<\sigma<1}.
\end{gather}
 The Riemann hypothesis will yield a somewhat sharper lower bound. It would be interesting to understand the true lower bound in Lemma 1.

In view of Lemma 1, we can now proceed as in the proof of Theorem 3 \cite{Andersson4}:
\begin{thm}
 Under the assumption of the Lindel\"of hypothesis we have that the only lines where the Riemann zeta-function is universal in $L^1$-norm and for some interval $[0,\delta]$ are the lines $\Re(s)=\sigma$ for $\halv<\sigma<1$. Furthermore for those lines we have universality for any interval $[0,H]$ and in sup-norm.
\end{thm}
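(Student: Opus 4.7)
The plan is a case analysis in $\sigma$. The sup-norm universality on any $[0,H]$ for $\halv<\sigma<1$ is exactly Theorem 2 of this paper, so what remains is to show that $L^1$-universality on any interval $[0,\delta]$ fails whenever $\sigma\notin(\halv,1)$. Of the four sub-cases, two are already in the literature: $\sigma=\halv$ is handled by Theorem 6 above (which is in fact stronger than $L^1$-non-universality), and $\sigma=1$ by \cite[Theorem 2]{Andersson4}. So I would concentrate on the two remaining cases $\sigma>1$ and $\sigma<\halv$.

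The case $\sigma>1$ is an immediate consequence of absolute convergence: since $|\zeta(\sigma+it)|\leq\zeta(\sigma)$ we have $\int_0^\delta\abs{\zeta(\sigma+iT+it)}\,dt\leq\delta\zeta(\sigma)$ uniformly in $T$, and for any $f\in C[0,\delta]$ with $\int_0^\delta|f|\,dt$ sufficiently large, the reverse triangle inequality makes the $L^1$ distance between $f$ and the translates bounded below by a positive constant.

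The case $\sigma<\halv$ is the heart of the matter and is where Lemma 1 enters. The idea is to run the previous argument in reverse by showing that $\int_0^\delta\abs{\zeta(\sigma+iT+it)}\,dt$ is in fact unbounded as $T\to\infty$, so no fixed $f\in C[0,\delta]$ can be $L^1$-approximated by the translates. To achieve this I would invoke the functional equation $\zeta(s)=\chi(s)\zeta(1-s)$, together with the asymptotic $|\chi(\sigma+it)|\sim\p{|t|/2\pi}^{\halv-\sigma}$ and the identity $|\zeta(1-\sigma-i(T+t))|=|\zeta(1-\sigma+i(T+t))|$ (from the reality of the Dirichlet coefficients), to obtain
\begin{gather*}
\int_0^\delta\abs{\zeta(\sigma+iT+it)}\,dt\;\gg\; T^{\halv-\sigma}\int_0^\delta\abs{\zeta(1-\sigma+i(T+t))}\,dt.
\end{gather*}
For $0\leq\sigma<\halv$ the parameter $1-\sigma$ lies in $(\halv,1]$ and Lemma 1 supplies the lower bound $T^{-\epsilon}$ on the right-hand integral, yielding $\gg T^{\halv-\sigma-\epsilon}\to\infty$. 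For $\sigma<0$ the line $\Re(s)=1-\sigma>1$ lies in the region of absolute convergence and the Euler product gives $|\zeta(1-\sigma+it)|\gg 1$ directly, leading to an even stronger conclusion.

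The main obstacle is entirely in the $\sigma<\halv$ regime: I must ensure that the power-of-$T$ growth from $|\chi|$ dominates the $T^\epsilon$ loss in Lemma 1. This is exactly why the range $\p{\halv,1}$ in Lemma 1 is matched to the range $\sigma<\halv$ reached by the functional equation, and why the $\epsilon$ is ultimately harmless: for any fixed $\sigma<\halv$ one simply picks $\epsilon\in\p{0,\halv-\sigma}$ so that the surviving exponent $\halv-\sigma-\epsilon$ is positive.
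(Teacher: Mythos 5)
Your proposal is correct and follows essentially the same route as the paper: Theorem 2 for the lines $\halv<\sigma<1$, the critical-line results of this paper for $\sigma=\halv$, the result of \cite{Andersson4} for $\sigma=1$, and, for $0<\sigma<\halv$, the functional equation together with the Stirling asymptotics for the $\chi$-factor combined with Lemma 1 to force $\int_0^\delta\abs{\zeta(\sigma+iT+it)}\,dt\gg_\epsilon T^{\halv-\sigma-\epsilon}\to\infty$. The only item you omit is the (trivial) observation that lines not parallel to the imaginary axis also fail to be universal, which the phrase ``the only lines'' in the statement implicitly requires and which the paper dispatches in a single clause via the functional equation and the Dirichlet series.
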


\begin{proof}  For the case $1/2<\sigma<1$ where we have universality, the sup-norm case is exactly Theorem 2. Universality in $L^1$-norm  means that any function $f(t)$ in $L^1(0,H)$ should be possible to be approximated by the Riemann zeta-function $\zeta(\sigma+iT+it)$. Since $C(0,H)$ is dense in $L^1(0,H)$,  Rudin \cite[Theorem 3.14] {Rudin}, this also follows from Theorem 2. 

Regarding when we do not have universality: The only difficult remaining lines  to consider are the  lines $\Re(s)=\sigma$ for $0<\sigma<\halv$. Here we will need the Lindel\"of  hypothesis. From Lemma 1, together with the functional equation and Stirling's formula for the Gamma-factors we obtain that
\begin{gather*} 
 \int_{T}^{T+\delta} \abs{\zeta(\sigma+it)}dt \gg_{\epsilon,\delta}  T^{\halv-\sigma-\epsilon}, \qquad   \qquad \p{0<\sigma<\halv}.
\end{gather*}
Since this will tend to infinity as $T \to \infty$ we see that we do not have universality in $L^1$-norm on the line  $\Re(s)=\sigma$ for $0<\sigma<\halv$. That we do not have universality on the lines $\Re(s)=\sigma$ with $\sigma \leq 0$ and the lines that are not parallel to the imaginary axis follows trivially from the functional equation and the definition of the Riemann zeta-function.
\end{proof}

We would like to restate our problem \cite[Problem 1]{Andersson4}:

\begin{prob}
 Prove Theorem 7 unconditionally.
\end{prob}
While we can not manage this, at least we have weakened the condition from that of the Riemann hypothesis to that of the Lindel\"of hypothesis. In addition,  we will prove two propositions that gives us unconditional, albeit somewhat weaker results in the case $0<\sigma<\halv$.

\section{Proof of Lemma 1}
The main tools of proof for Lemma 1 are Jensen's formula and Jensen's inequality. We first define
\begin{gather} \label{logdef} \log^+ x=\begin{cases} \log x & x \geq 1 \\ 0 & \text{otherwise} \end{cases} \qquad \text{and} \qquad  \log^- x=\begin{cases} -\log x & 0<x<  1 \\ 0 & \text{otherwise} \end{cases} \end{gather}
so that $\log x=\log^+x-\log^- x$.
\begin{lem} 
  We have that for any $\sigma<1$ that 
\begin{multline*}   \int_{-\infty}^\infty \frac{\log^-|\zeta(\sigma+it+iT)|}{1+t^2} dt   \\ \leq  \int_{-\infty}^\infty \frac{\log^+ |\zeta(\sigma+it+iT)|}{1+t^2} dt-\pi \log\left|\zeta(\sigma+1+iT)\right|-\pi \log \left|\frac{\sigma+iT}{2-\sigma+iT} \right|, \end{multline*}
where the inequality can be replaced by an equality  if and only if the Riemann zeta function has no zeroes with $\Re(s) > \sigma$.
\end{lem}
  
\begin{proof} We recall the Jensen's formula %\cite{Jensen1}  
on the disc $\{z:|z|<r\}$ with $r<1$ for meromorphic functions with a single pole at $z=b_0$ with $|b_0|<r$ and zeroes $a_1,\ldots,a_n$  with $|a_k|<r$
\begin{gather*} \log |f(0)|=\sum_{k=1}^n \log  \left|\frac {a_k} r \right|  -\log \left|\frac{b_0} r \right| +  \frac 1 {2 \pi} \int_0^{2 \pi} \log |f(re^{i\theta})| d \theta. \end{gather*} 
We will take the limit as $r \to 1$ and as long as the limit of the integrals on the right hand side is convergent and converges to the integral with $r=1$ the sum over the zeroes (which might be infinitely many) will also be convergent and the formula 
\begin{gather} \label{iop} \log |f(0)|=\sum_{k} \log  \left|a_k \right|  -\log |b_0| +  \frac 1 {2 \pi} \int_0^{2 \pi} \log |f(e^{i\theta})| d \theta \end{gather} 
holds. We will now use the  Schwartz-Christoffel map $s=\phi(z)$ mapping the unit disc $\{z \in \C: |z| \leq 1\}$ to the half plane $\Re(s) \geq 0$ given by
$$ s=\phi(z)=\frac{1-z}{z+1}, \qquad \text{and} \qquad  z= \phi^{-1}(s)=\frac{1-s}{1+s}.$$ 
With the change of variable $\displaystyle e^{i \theta}=\frac{1-it}{1+it}$, we get $\displaystyle t=\tan \frac \theta 2$
and $\displaystyle d\theta= \frac {2dt} {t^2+1}$. With $f(z)=\zeta(\sigma+\phi^{-1}(s)+iT)$ we get
\begin{multline*}
  \log|\zeta(\sigma+1+iT)| \\ = \sum_{k=1}^\infty \log |a_k|-\log \left|\frac{\sigma+iT}{2-\sigma+iT} \right| + \frac 1 {\pi}\int_{-\infty}^\infty \frac{\log|\zeta(\sigma+it+iT)|}{1+t^2} dt,
\end{multline*}
where  $|a_k|<1$ comes from the zeros of the Riemann zeta-function on the half plane $\Re(s)>\sigma$. Lemma 2 follows from multiplying by $\pi$, by rearranging the terms and from the definition \eqref{logdef}.
 \end{proof}

\noindent {\em Proof of Lemma 1.}
It follows for $0 < \delta \leq 1$ that
\begin{gather} \label{iq} \int_{T}^{T+\delta} \log^{-}|\zeta(\sigma+it)|dt \leq  2 \int_{-\infty}^\infty \frac{\log^{-}|\zeta(\sigma+it+iT)|dt}{1+t^2}.\end{gather} Now we use Lemma 2. The term $-\pi \log|\zeta(1+\sigma+iT)|$ can be estimated from above by $\displaystyle \pi \log \frac{\zeta(3/2)}{\zeta(3)}<3$. The term coming from the pole will be less than $3$ for $T \geq 1$ and by applying the fact that $\displaystyle \log^+ |\zeta(\sigma+it+iT)| \leq \frac{\epsilon} {2 \pi} \log T+\frac{C_\epsilon-6} 2$ for $T \geq 1$, $0\leq t\leq 1$ and $\displaystyle \sigma \geq \frac 1 2$ which follows by the Lindel\"of hypothesis we can estimate the integral on the right hand side of \eqref{iq} by $\log T+(C_\epsilon-6)$ and we obtain the estimate
\begin{gather} \label{yyy} \int_{T}^{T+\delta} \log^{-}|\zeta(\sigma+it)|dt \leq \epsilon \log T+C_\epsilon. \end{gather}
Since $f(x)=\log^- x $ is a convex function we can apply Jensen's inequality 
\begin{gather} \label{yy} 
\log^- \left (\frac 1 \delta \int_{T}^{T+\delta} |\zeta(\sigma+it)|dt \right) \leq  \frac 1 \delta \int_{T}^{T+\delta} \log^{-}|\zeta(\sigma+it)|dt. \end{gather}
Combining the inequalities \eqref{yyy} and \eqref{yy} and taking the exponential we obtain Lemma 1. \qedsymbol

\section{Non universality for $\Re(s)=\sigma$ with $0<\sigma<\halv$}

By the same method, as in the proof of Lemma 1 it follows unconditionally that given $\epsilon>0$ and $\sigma>\halv$, then there exist some absolute constant $H>0$ such that
\begin{gather*}
 \int_{T}^{T+H} \abs{\zeta(\sigma+it)} dt \gg_\epsilon  T^{-\epsilon}.
\end{gather*}
See Theorem 1.1.1.  and the remark at the end of page 7  of Ramachandra \cite{Ramachandra}. From this we obtain the following proposition
\begin{prop}
 Let $0<\sigma<\halv$. Then there exists a $H>0$ such that the zeta-function is not universal on the line $\Re(s)=\sigma$ for the interval $[0,H]$ in $L^1$-norm. More precisely,   for any continuous function $f(t)$ on $[0,H]$ we have that
\begin{gather*}
 \liminf_{T \to \infty} \int_0^H \abs{f(t)-\zeta(\sigma+it+iT)}dt = \infty.
\end{gather*}
\end{prop}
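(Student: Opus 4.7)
The plan is to reduce Proposition 1 to the unconditional Ramachandra-type lower bound quoted just before it, transported across the critical line via the functional equation, and then to use a one-line triangle inequality.

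First I would invoke the stated unconditional bound: there exists an absolute $H>0$ such that for every $\epsilon>0$ and every $\sigma'>\halv$,
\begin{gather*}
 \int_{T}^{T+H} \abs{\zeta(\sigma'+it)}\,dt \gg_\epsilon T^{-\epsilon}.
\end{gather*}
Apply this with $\sigma'=1-\sigma$, which lies in $(\halv,1)$ since $0<\sigma<\halv$. Next, use the functional equation $\zeta(s)=\chi(s)\zeta(1-s)$ together with Stirling's formula for $\chi(s)$, which gives $\abs{\chi(\sigma+it)}\asymp (t/2\pi)^{\halv-\sigma}$ as $t\to\infty$. Over the finite window $t\in[T,T+H]$ this factor is uniformly $\asymp T^{\halv-\sigma}$, so
\begin{gather*}
 \int_{T}^{T+H} \abs{\zeta(\sigma+it)}\,dt \;\gg\; T^{\halv-\sigma}\int_{T}^{T+H} \abs{\zeta(1-\sigma+it)}\,dt \;\gg_\epsilon\; T^{\halv-\sigma-\epsilon}.
\end{gather*}
Choosing $\epsilon<\halv-\sigma$, the right-hand side tends to infinity with $T$.

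Finally, for any fixed continuous $f$ on $[0,H]$ we have $M_f:=\int_0^H \abs{f(t)}\,dt<\infty$, and the triangle inequality gives
\begin{gather*}
 \int_0^H \abs{f(t)-\zeta(\sigma+it+iT)}\,dt \;\geq\; \int_0^H \abs{\zeta(\sigma+it+iT)}\,dt \;-\; M_f.
\end{gather*}
After the change of variable $t\mapsto t+T$ (i.e.\ the translated integral equals $\int_T^{T+H}\abs{\zeta(\sigma+iu)}\,du$), the lower bound above shows the right-hand side tends to $\infty$, proving the claim.

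There is no substantial obstacle: the only point requiring care is checking that Stirling's estimate for $\chi$ is genuinely uniform in $t\in[T,T+H]$ (which it is, since $H$ is a fixed constant and $\chi$ varies only slowly on that scale), and that the implied constant in the Ramachandra bound is independent of $T$, which is exactly its content. The argument is the unconditional analogue of what is done inside the proof of Theorem 7, simply losing the range $\halv<\sigma<1$ (since the unconditional short-interval bound only gives us one fixed $H$ rather than every $H$), which is precisely why the conclusion here is weaker than in Theorem 7.
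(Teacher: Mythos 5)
Your proposal is correct and follows essentially the same route as the paper: the unconditional Ramachandra short-interval bound at $1-\sigma$, transported to $\sigma$ via the functional equation and Stirling (exactly as in the paper's proof of Theorem 7, but with the fixed $H$ forced by the unconditional statement), followed by the triangle inequality. The paper leaves these details implicit ("From this we obtain the following proposition"), and your write-up supplies them accurately.
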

 Infact by Eq \eqref{ajaj} the constant $H$ in Proposition 1 depending on $\sigma$ can be made explicit (a variant of Ramachandra's result with an explicit $A$ should be used). In order to obtain optimal estimates for $H$,    a more careful choice of $\sigma_1$ and $\sigma_2$ are also needed. For example $\sigma_1=1$ and $\sigma_2=2-\sigma$ will give better constants. Then however we would need to use the lower bounds of \cite[Theorem 2]{Andersson4} instead of the trivial bounds  to estimate $J(\sigma_1)$.

There is a special feature of the classical universality theorems, namely that of ``positive lower density''. More precisely in Theorem 1 it is proved that
\begin{gather*}
 \liminf_{T \to \infty} \frac 1 T \operatorname{meas} \{\max_{z \in D} |f(z)-\zeta(z+iT)|<\epsilon \} >0.
\end{gather*}
The same holds true for Theorem 2.
 These results follows from the proofs of universality but should in our opinion not be regarded as an essential part of the definition of universality.  In the case of lines $\Re(s)=\sigma$ for $0<\sigma<\halv$   it is in fact much easier to prove that we do not have positive density universality, than regular universality. In fact from classical zero-density estimates, for example, from the following  result of Ingham \cite{Ingham} (see also \cite[Chapter 11]{Ivic}):
\begin{gather}
  N(\sigma,T)\ll  T^{3(1-\sigma)/(2-\sigma)}\log^5 T
 \end{gather}
it follows that
\begin{prop}
 The zeta-function is not ``positive density'' universal on the line $\Re(s)=\sigma$ for $0<\sigma<\halv$. More precisely: Let $0<\sigma<\halv$ and $H>0$ be fixed  and
 let $f$ be any continuous function on the interval  $[0,H]$. Then 
 \begin{gather*}
  \frac 1 T \operatorname{meas} \{\max_{t \in [0,H]} |f(t)-\zeta(\sigma+it+iT)|<\epsilon \} \ll_{\epsilon,\varepsilon}  T^{(2\sigma-1)/(1+\sigma) + \varepsilon}.
 \end{gather*} 
\end{prop}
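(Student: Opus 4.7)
The plan is to bound the measure of the exceptional set
\[
E_\mathcal{T} = \{T \in [0, \mathcal{T}] : \max_{t \in [0,H]} \abs{f(t) - \zeta(\sigma+it+iT)} < \epsilon\}
\]
by associating each $T \in E_\mathcal{T}$ with a nontrivial zero of $\zeta$ having real part close to $1-\sigma$ and imaginary part near $T$, and then counting these zeros via Ingham's estimate.

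First, by the functional equation $\zeta(s) = \chi(s)\zeta(1-s)$ together with Stirling's asymptotic $\abs{\chi(\sigma+it+iT)} \sim (T/2\pi)^{\halv-\sigma}$, the uniform bound $\abs{\zeta(\sigma+it+iT)} \leq \|f\|_\infty+\epsilon$ valid for $T \in E_\mathcal{T}$ converts into
\[
\abs{\zeta(1-\sigma+it+iT)} \ll_{f,\epsilon} T^{\sigma-\halv} \qquad (t \in [0,H]).
\]
On the line $\Re(s) = 1-\sigma > \halv$, $\zeta$ is therefore unusually small on an interval of length $H$ at height $T$.

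Second, I would argue that such smallness forces a nearby nontrivial zero. Applying Jensen's formula on a disk centered on the bad interval and reaching past the line $\Re(s) = 2$ (where $\abs{\zeta}$ is bounded above and below by absolute constants), together with the unconditional convexity bound $\abs{\zeta(s)} \ll T^{K}$ for some $K$ on the rest of the disk, forces at least one zero $\rho$ of $\zeta$ to lie in the disk. A more careful form of this argument (or Littlewood's lemma on the rectangle $[1-\sigma,2]\times[T,T+H]$) shows that such a zero can be taken with $\Re(\rho) \geq 1-\sigma - \eta$ and $\abs{\Im(\rho) - T} = O(H)$ for any fixed $\eta > 0$. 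Then by Ingham's estimate $N(1-\sigma-\eta, \mathcal{T}) \ll \mathcal{T}^{3(\sigma+\eta)/(1+\sigma+\eta)} \log^5 \mathcal{T}$, and since each such zero can account for at most an $O(H)$-measure subset of $E_\mathcal{T}$, we obtain $\operatorname{meas}(E_\mathcal{T}) \ll H \cdot N(1-\sigma-\eta, \mathcal{T})$; dividing by $\mathcal{T}$ and taking $\eta$ sufficiently small (depending on $\varepsilon$) yields the density $\mathcal{T}^{(2\sigma-1)/(1+\sigma)+\varepsilon}$.

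The main obstacle is the second step, passing from smallness of $\zeta$ on an interval to the existence of a zero with the required real part. Controlling the $O(\log T)$ error terms in Jensen's formula (in particular the average of $\log\abs{\zeta}$ on a large circle) so that they do not swallow the main smallness estimate $(\halv-\sigma)\log T$ provided by the functional equation is the key technical challenge; once this is handled, the remainder of the argument is a direct application of Ingham's theorem.
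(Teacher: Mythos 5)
Your overall strategy --- using the functional equation and Stirling to convert the boundedness of $\zeta(\sigma+it+iT)$ on the segment into the smallness $\abs{\zeta(1-\sigma+it+iT)}\ll T^{\sigma-\halv}$, deducing a zero with real part at least $1-\sigma-\eta$ and height within $\Oh{1}$ of $T$, and then counting such zeros by Ingham's estimate --- is exactly the argument the paper has in mind (the paper gives nothing beyond the citation of Ingham), and your final bookkeeping is correct: each such zero covers a set of $T$ of measure $\Oh{1}$, and $3(\sigma+\eta)/(1+\sigma+\eta)-1\to(2\sigma-1)/(1+\sigma)$ as $\eta\to 0$, with $\log^5 T$ absorbed into $T^{\varepsilon}$. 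The gap is the middle step, which you flag but do not resolve, and neither of the two tools you propose closes it. Jensen's formula on a disk centred on the segment and reaching past $\Re(s)=2$ has radius greater than $1+\sigma$, so the disk contains points of real part below $0$; since any such disk at height $T$ already contains $\asymp\log T$ zeros of $\zeta$ by the Riemann--von Mangoldt formula, Jensen detects zeros unconditionally and gives no control whatsoever on their real parts. Littlewood's lemma on $[1-\sigma,2]\times[T,T+H]$ does localize the real part, but its horizontal boundary terms are genuinely of size $\Oh{\log T}$ with an absolute implied constant, while the available main term from the left edge is only $(\halv-\sigma)H\log T$; for the arbitrary fixed $H>0$ allowed in the Proposition the error terms swallow the main term, so this route at best proves the analogue of Proposition 1 (some $H$ depending on $\sigma$), not Proposition 2.

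The step is rescued by the standard zero-free-region lemma (Borel--Carath\'eodory combined with Hadamard's three-circles theorem, as in Titchmarsh, Chapter 14): if $\zeta$ has no zeros in the rectangle $\{\Re(s)\ge 1-\sigma-\eta,\ \abs{\Im(s)-T}\le H+2\}$, then $\log\zeta$ is analytic there and satisfies $\abs{\log\zeta(1-\sigma+it+iT)}\ll_{\eta}(\log T)^{\theta}$ for some $\theta=\theta(\eta,\sigma)<1$, whence $\abs{\zeta(1-\sigma+it+iT)}\ge\exp\p{-(\log T)^{\theta}}\gg_{\varepsilon}T^{-\varepsilon}$, contradicting $\ll T^{\sigma-\halv}$ for $T$ large. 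This works for every fixed $H>0$ (indeed already at the single point $t=0$), produces the required zero with $\Re(\rho)\ge 1-\sigma-\eta$ and $\abs{\Im(\rho)-T}\le H+2$, and also explains the sharpening stated after the Proposition, since the contradiction only requires $\abs{\zeta(1-\sigma+it+iT)}\ll T^{-\varepsilon}$, i.e. $\abs{\zeta(\sigma+it+iT)}\ll T^{\halv-\sigma-\varepsilon}$. With this lemma substituted for Jensen's formula or Littlewood's lemma, your argument is complete.
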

It follows from the method of proof that Proposition 2 can  be sharpened by replacing $<\epsilon$ with $<\epsilon T^{\halv-\sigma-\varepsilon}$.

\section{Hardy's $Z$-function}

The Hardy $Z$-function\footnote{This function has been extensively studied. Recent papers includes for example Jutila \cite{Jutila}, Ivic \cite{Ivic2,Ivic3} and Korolev \cite{Korolev}.}  is defined by
\begin{gather}  \label{uiq2}
 Z(t)=e^{i \theta(t)} \zeta \p{\halv+it},
\end{gather}
where $\theta(0)=0$ and $\theta(t)$ is a continuous real valued  function, that is chosen so that $Z(t)$ is a real valued function.

 By the functional equation of the Riemann zeta function we have that (See e.g. Titchmarsh \cite[Section 10.2]{Titchmarsh})
\begin{gather*}
  e^{i \theta(t)} = 
-\halv \p{\frac 1 4 +t^2} \pi^{-1/4-i t/2} \Gamma\p{\frac 1 4 +\frac{it} 2}\zeta\p{\halv+it}. \\ \intertext{Hence}
\theta(t)= \arg \p{- \pi^{- i t/2} \Gamma\p{\frac 1 4 +\frac{it} 2}\zeta\p{\halv+it}}.
\end{gather*}
By the fact that the Hardy $Z$-function is real valued the  argument of $Z(t)$ must therefore be $n \times \pi$ for some integer $n$.
From this we see that
\begin{gather*}
 \arg \zeta \p{\halv+it}=n\times \pi-\theta(t),
\end{gather*}
for each $t$  and some integer $n$ depending on $t$. It is clear that
\begin{gather*}
 |A-B| = \abs{A} \cdot \abs{1-A/B} \geq |A| \cdot \abs{\Im (1-A/B)} = |A| \cdot |\sin \arg(A/B)|
\end{gather*}
Hence assuming that $f(t)=e^{i \eta(t)} |f(t)|$ we obtain that
\begin{gather} \label{eq9} 
   \int_0^H \abs{f(t)-\zeta \p{\halv+it+iT}} dt  \geq  \int_0^H \abs{f(t)} \cdot \abs{\sin (\theta(t+T)-\eta(t))} dt
\end{gather}
From this equation we will obtain a proof of our theorems.
We first prove the following Lemma:
\begin{lem}
 We have that
 \begin{gather*} \lim_{T \to \infty} \int_{A}^B |\sin (\theta(t+T)+C)| dt =\frac{2(B-A)}{\pi}.\end{gather*}
\end{lem}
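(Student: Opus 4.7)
The plan is to reduce the claim to oscillatory-integral estimates via the Fourier expansion of $|\sin|$. I would start from the uniformly convergent series
\begin{gather*}
|\sin x| = \frac{2}{\pi} - \frac{4}{\pi} \sum_{n=1}^\infty \frac{\cos(2nx)}{4n^2-1},
\end{gather*}
whose coefficients decay like $n^{-2}$. Substituting $x = \theta(t+T)+C$ and integrating termwise over $[A,B]$ separates out the desired main term $\frac{2(B-A)}{\pi}$, so the task reduces to showing that
\begin{gather*}
\sum_{n=1}^\infty \frac{1}{4n^2-1} \int_A^B \cos\p{2n\theta(t+T)+2nC}\,dt \to 0 \qquad (T\to\infty).
\end{gather*}

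The key input is that the phase $\theta$ has a rapidly growing derivative. Stirling's formula applied to the explicit expression for $\theta$ displayed just above the lemma gives
\begin{gather*}
\theta'(u) = \halv \log \frac{u}{2\pi} + \Oh{u^{-1}}, \qquad \theta''(u) = \Oh{u^{-1}},
\end{gather*}
so $\theta'(t+T) \gg \log T$ uniformly for $t\in[A,B]$ once $T$ is large. A single integration by parts against this phase then yields
\begin{gather*}
\int_A^B \cos\p{2n\theta(t+T)+2nC}\,dt = \left[\frac{\sin\p{2n\theta(t+T)+2nC}}{2n\theta'(t+T)}\right]_A^B + \int_A^B \frac{\theta''(t+T)\sin\p{\cdots}}{2n\theta'(t+T)^2}\,dt,
\end{gather*}
and both contributions are bounded trivially by $\Oh{1/(n\log T)}$ (the interior integral is even smaller, of order $\Oh{1/(nT\log^2 T)}$).

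Summing over $n$ against the convergent factor $1/(4n^2-1)$, the whole remainder is $\Oh{1/\log T}$, which tends to $0$. Interchange of sum and integral is justified by the absolute and uniform convergence of the Fourier series. The only point requiring care is that Stirling's estimate be applied uniformly throughout $[A+T,B+T]$, but for $[A,B]$ fixed and $T$ eventually much larger than $\max(|A|,|B|)$ this is routine. I do not anticipate a serious obstacle: morally, $\theta'(t+T)\to\infty$ forces $\theta(t+T)+C$ to equidistribute modulo $2\pi$ as $T\to\infty$, so the average of $|\sin|$ against $dt$ converges to the mean value $2/\pi$.
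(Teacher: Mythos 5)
Your proposal is correct, but it takes a genuinely different route from the paper. The paper's proof works directly with the phase: it uses Stirling's formula to linearize $\theta(t+T)= t\log T + D + \Oh{1/T}$ on the fixed interval $[A,B]$, substitutes $x=t\log T+D+C$, and then reads off the limit from the exact period-$\pi$ structure of $\abs{\sin x}$ averaged over an interval of length $(B-A)\log T$, obtaining the mean value $2/\pi$ with error $\Oh{1/\log T}$. You instead leave the phase alone and decompose the amplitude, expanding $\abs{\sin x}=\frac2\pi-\frac4\pi\sum_{n\ge1}\frac{\cos 2nx}{4n^2-1}$ and killing each harmonic by a first-derivative (nonstationary phase) estimate using $\theta'(t+T)\gg\log T$; the absolutely convergent coefficients then give the same $\Oh{1/\log T}$ remainder. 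The paper's argument is shorter and more elementary, needing only that the phase increment is asymptotically linear in $t$ plus the Lipschitz continuity of $\abs{\sin}$ to absorb the $\Oh{1/T}$ phase error. Your argument is more robust: it never uses that $\theta(t+T)-\theta(T)$ is approximately linear, only that $\theta'\to\infty$ and $\theta''$ is under control, so it would apply verbatim with $\abs{\sin}$ replaced by any periodic function with summable Fourier coefficients and with $\theta$ replaced by a much more general phase. Both proofs are complete; yours just pays for its generality with the (routine) justification of the termwise integration.
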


\begin{proof}
 By Stirling's formula we have that \begin{gather*} \theta(t) =  t \log t+a t + b +O(1/t).\end{gather*}
 Hence 
\begin{gather*}\theta(t+T)=  t \log T + D  + O(1/T).\end{gather*} By the substitution $x=t \log T+D+C$  we see that
 \begin{align*} \int_A^B \abs{\sin (\theta(t+T)+C)}dt &= \int_{A}^B  |\sin \theta(t \log T+C+D)| dt + \Oh{\frac {|A-B|}T} \\
&=    \frac 1 {\log T} \int_{A  \log T-D-C}^{B  \log T-D-C} |\sin  x| dx  + \Oh{\frac {|A-B|}T} \\ &= \frac{2(B-A)}{\pi}+ \Oh{\frac 1 {\log T}} + \Oh{\frac {|A-B|}T},
 \end{align*}
by the fact that $\abs{\sin x} $ has period $\pi$ and
\begin{gather*}
  \int_0^{\pi} \abs{\sin x}  dx= 2 \int_0^{\pi/2} \sin x \, dx =2.
\end{gather*}
\end{proof}

\section{Proof of Theorem 5}
 The proof essentially follows by estimating the integral with respect to the function $f$ into Riemann sums. By dividing the integral
\begin{gather*}
 (*)=  \int_0^H \abs{f(t)-G(T+t)\zeta \p{\halv+it+iT}}  dt, \\ \intertext{into $N$ sub-intervals, we obtain}
(*)=\sum_{k=1}^N  \int_{(k-1)/N}^{kH/N} \abs{f(t)-G(T+t)\zeta \p{\halv+it+iT}}  dt.
\end{gather*}
Since $f(t)$ is continuous on the compact interval $[0,H]$, we can choose $N$ so that
\begin{gather} \label{eqab}
|f(x)-f(x_0)|<\epsilon,  \qquad \text{whenever} \qquad |x-x_0|<1/N. 
\end{gather}
 We obtain

\begin{gather*} (*) = \sum_{k=1}^N  \int_{(k-1)/N}^{kH/N} \abs{f(kH/N)-G(T+t)\zeta \p{\halv+it+iT}}dt + \Oh{H \epsilon}.  \end{gather*}
Let us define $C_k$ such that  $f(kH/N)= e^{iC_k} |f(kH/N)|$. By \eqref{eq9} we get
\begin{gather*} (*)   \geq \sum_{k=1}^N  f(kH/N) \int_{(k-1)/N}^{kH/N} |\sin (\theta(t+T)+C_k)|dt + \Oh{H \epsilon}. \end{gather*}
 By using Lemma 3 on the integrals on the right hand side we see that 
\begin{gather*} \liminf_{T \to \infty} \int_0^H  \abs{f(t)-G(T+t)\zeta \p{\halv+it+iT}}  dt \geq  \frac {2H} {\pi N} \sum_{k=1}^N f \p{\frac {kH} N} + \Oh{H \epsilon}. \end{gather*}
By estimating the Riemann sum on the right by its integral and using \eqref{eqab} we obtain that
\begin{gather*} \liminf_{T \to \infty} \int_0^H  \abs{f(t)-G(T+t)\zeta \p{\halv+it+iT}}  dt \geq  \frac 2 \pi \int_0^H |f(t)| dt  + \Oh{H \epsilon}. \end{gather*}
Our result follows by letting $\epsilon \to 0$. \qed

 \section{Proof of Theorem 6}
By Theorem 5 we have 
\begin{gather*}
 \liminf_{T \to \infty}  \int_0^H \abs{f(t)-\zeta \p{\halv+it+iT}} dt  \geq   \frac 2 {\pi} \int_0^H \abs{f(t)}  dt >0. 
\end{gather*}
In particular it means that for any non-zero function there exist a $T_0$ and an $\epsilon>0$ such that if $|T| \geq T_0$ then
\begin{gather*}
 \int_0^H \abs{f(t)-\zeta \p{\halv+it+iT}} dt  \geq  \epsilon.
\end{gather*}
Thus it is sufficient to consider $|T| \leq T_0$. Let us now choose $|T| \leq T_0$ that minimize the integral.
\begin{gather} \label{uiq}
 \int_0^H \abs{f(t)-\zeta \p{\halv+it+iT}} dt.  
\end{gather}
 Since $f(t)$ is a real valued continuous function we have that
 the argument of $f(t)$ is constant in some small interval. However since $\theta(t)$ defined by \eqref{uiq2} is an analytic non-constant function it can not be constant in any interval. This means that the function $\zeta(\halv+iT+it)$ can not equal the function $f(t)$ on any small interval, and we have that the integral \eqref{uiq} is strictly positive. \qed

\section{Possible universality on the critical line}
\subsection{Universality of the Hardy $Z$-function?}

Once we know that the Riemann zeta-function is not universal on the critical line, and the reason is that the argument of the Riemann zeta-function is to regular, we might ask what happens when we remove this regularity. A natural question  is therefore to ask whether the Hardy $Z$-function admits some universality property:
\begin{prob}
 Is the Hardy $Z$-function  universal in $L^1$-norm? If $f \in L^1(0,H)$ is a real valued  function, is it true that 
  \begin{gather*} \liminf_{T \to \infty} \int_0^H |Z(T+t)-f(t)|dt=0? \end{gather*}
\end{prob}
The problem can be asked with $L^1$ norm replaced by $L^2$-norm, and with some  weight on the Hardy function, for example with $Z(t+T)$ replaced by $Z(t+T)/\sqrt{\log (t+T)}$. However, from the same arguments that we used in Section 2, that the zeroes of the Riemann zeta-functions are ``too dense'' it follows that no nonzero function can be approximated by the Hardy $Z$-function in sup-norm. Thus we know that we do not have universality (at least if the Riemann hypothesis is assumed) in sup-norm.  

It might be easier to solve the problem if the absolute value of the Hardy Z-function, or equivalently the absolute value of the Riemann zeta-function is considered:
\begin{prob}
  Is the absolute value of the Riemann zeta-function universal on the critical line in $L^1$-norm? If $f \in L^1(0,H)$ is a positive function is it true that
  \begin{gather*} \liminf_{T \to \infty} \int_0^H \abs{\abs{\zeta\p{\halv+iT+it}}-f(t)}dt=0?\end{gather*}
\end{prob}
Similarly to the case of the Hardy $Z$-function proper we can consider weighted version of universality and other norms (although not sup-norm).

\subsection{Universality of related functions}

 Selberg (unpublished) proved that $\abs{\zeta \p{\halv+it}}^{1/\sqrt{\log \log t}}$ admits a limit distribution. For the first published proof, see Joyner \cite{Joyner}. Hejhal \cite{Hejhal} proved a somewhat sharper and more explicit variant of this statement. 

 Related work has been done by Laurin{\v{c}}ikas (\cite{Laurincikas2}, \cite{Laurincikas3} and  \cite[Chapter 7]{Laurincikas}). For example he disproved a conjecture of Bagchi \cite{Bagchi} that  $\zeta(\halv+it)$ admits a limit distribution in the space of continuous functions, and conjectured that $\zeta(\halv+it)^{1/\sqrt{\log \log t}}$ does admit this limit distribution. 

These results suggest to us that on the critical line the natural function to consider might be
\begin{gather} \label{ppi}
 \zeta\p{\halv+it}^{1/\sqrt{\log \log t}}
\end{gather}
instead of $\zeta(\halv+it)$ proper. We remark here that this function is not uniquely defined and must be defined through analytic continuation through a path. Natural paths $\gamma(t)$ might be the ones that can be parametrized by $ \gamma(t)=t+i(\halv-\sigma(t))$, such that $\halv+i \gamma(t)$  stays   on the right hand side of the zeros (and the pole) of the zeta-function in the complex plane. This class of paths will give  a precise definition
of the function in \eqref{ppi}, except  for values of $t$  where $\zeta(\sigma+it)=0$ and $\sigma>\halv$ (zeroes violating the Riemann hypothesis) where the function might have a discontinuity.
 We therefore ask the following question regarding universality:
\begin{prob}
  Is it true for any complex valued  continuous function $f$ on the interval $[0,H]$  that
  \begin{gather*} \liminf_{T \to \infty} \int_0^H \abs {\zeta\p{\halv+iT+it}^{1/\sqrt{\log \log (t+T)}}-f(t)}dt=0?
 \end{gather*}
\end{prob}
Here the function must be defined  precisely defined through analytic continuation through a path as discussed above. A way to simplify the problem and avoid the problem of the function not beeing uniqely defined is to consider the related problem with absolute values:
\begin{prob}
   Is it true that for any  positive continuous function on the interval $[0,H]$ that
  \begin{gather*}\liminf_{T \to \infty} \int_0^H \abs{\abs{ \zeta \p{\halv+iT+it}}^{1/\sqrt{\log \log (t+T)}}-f(t)}dt=0?
\end{gather*}
\end{prob}

As before it is true that questions in Problems 4 and Problems 5 can not be true (at least assuming RH) in sup-norm, by the ``density of zeros'' argument.
\begin{ack} The author is grateful to Aleksandar Ivi\'c who noticed serious mistakes in the proof of Lemma 1 in v1 of this paper. The proof is now rewritten using a new method. \end{ack}
\bibliographystyle{plain}

\def\cprime{$'$} \def\polhk#1{\setbox0=\hbox{#1}{\ooalign{\hidewidth
  \lower1.5ex\hbox{`}\hidewidth\crcr\unhbox0}}}
  \def\polhk#1{\setbox0=\hbox{#1}{\ooalign{\hidewidth
  \lower1.5ex\hbox{`}\hidewidth\crcr\unhbox0}}}

\end{document}